\documentclass[12pt]{amsart}
\synctex = 1

\usepackage{amsmath}
\usepackage{amssymb}
\usepackage{amsfonts}
\usepackage{amsthm}
\usepackage{xcolor}
\usepackage{hyperref}
\usepackage{mathrsfs}
\hypersetup{
    bookmarks=true,         
    colorlinks=true,       
    linkcolor=blue,          
    citecolor=green,        
    filecolor=cyan,         
    urlcolor=magenta        
}

\makeatletter
\@namedef{subjclassname@2020}{2020 Mathematics Subject Classification}
\makeatother

\newtheorem{theo}{Theorem}[section]
\newtheorem*{theo*}{Theorem}
\newtheorem{coro}[theo]{Corollary}
\newtheorem{lemm}[theo]{Lemma}
\newtheorem{prop}[theo]{Proposition}

\newcommand{\C}{\mathbb{C}}

\title[]{On an exponential  power sum}
\begin{document}
 
 \keywords{}
 \subjclass[2010]{}
 
 \author[N E Thomas]{Neha Elizabeth Thomas}
 \address{Department of Mathematics, University College, Thiruvananthapuram (Research Centre of the University of Kerala), Kerala, India\\ORCID: 0000-0002-5473-5542}
 \email{nehathomas2009@gmail.com}
 \author[K V Namboothiri]{K Vishnu Namboothiri}
\address{Department of Mathematics, Baby John Memorial Government College, Chavara, Sankaramangalam, Kollam, Kerala, INDIA\\Department of Collegiate Education, Government of Kerala, India\\
ORCID: 0000-0003-4516-8117}

\email[Corresponding Author]{kvnamboothiri@gmail.com}
\thanks{}

 \begin{abstract}
Using combinatorial techniques, we derive a recurrence identity that expresses an exponential power sum with negative powers in terms of another exponential power sum with positive powers. Consequently, we derive a formula for the power sum of the first $k$ natural numbers when the power is odd, which when used in combination with Faulhaber's formula for computing power sums helps us to retrieve the Bernoulli numbers in certain cases.
 \end{abstract}
 \keywords{Exponential power sum; binomial theorem; composition; ordered partition; Dirichlet character; Bernoulli numbers}
 \subjclass[2020]{05A18, 11B37, 11B75, 11L03}
 
 \maketitle
\section{Introduction}
A Dirichlet character $\chi$ modulo $k$ (where $k$ is a positive integer) is defined to be \textit{odd} if $\chi(-1)=-1$ and \textit{even} if $\chi(-1)=1$. The \textit{Dirichlet $L$-function} $L(s,\chi)$ is defined by the infinite series $\sum\limits_{n=1}^{\infty}\frac{\chi(n)}{n^s}$, where $s\in\C$ with $Re\,(s)>1$.
 The \textit{Gauss sum} $G(z,\chi)$ for any complex number $z$ is defined as

 \begin{align*}
 G(z,\chi):= \sum\limits_{m=1}^{k}\chi(m)e^{\frac{2\pi imz}{k}}.
 \end{align*}

E. Alkan established the following identity connecting $L(r,\chi)$ with Gauss sums and Bernoulli numbers:
  \begin{align}\label{lsidentity}
 \frac{(-1)^{v+1}kr!}{i^r2^{r-1}\pi^r}L(r,\chi)=\sum\limits_{q=0}^{2[\frac{r}{2}]}\binom{r}{q}
 B_q S(r-q,\chi)\text{ \cite[Theorem 1]{alkan2011values}},
 \end{align}
  where
\begin{align*}
S(m, \chi):=\sum\limits_{j=1}^k\left(\frac{j}{k}\right)^mG(j, \chi).
\end{align*}
and $B_q$ are the $q$\textsuperscript{th} Bernoulli numbers. See \cite[Chapter 12]{tom1976introduction} for the definition and other properties of Bernoulli numbers and Bernoulli polynomials.

E. Alkan derived formulae in \cite{alkan2011mean} for the sums $\sum\limits_{\chi \, \text{odd}}|L(1,\chi)|^2$ and $\sum\limits_{\chi \, \text{even}}|L(2,\chi)|^2$ using Identity (\ref{lsidentity}).
His computations involved the simplification of sums of the form $\sum\limits_{s=1}^{k-1}s^pe^{\frac{-2\pi ims}{k}}$. He mainly tried to express such sums in terms of positive arguments of the exponential function. It seems that the complexity of simplifying these exponential power sums prevented him from establishing an exact formula for computing the value of $\sum\limits_{\chi , r\text{ with same parity}}|L(r,\chi)|^2$ in general.

In this paper, we derive  an identity that expresses the exponential power sum $\sum\limits_{s=1}^{k-1}s^pe^{\frac{-2\pi ims}{k}}$ in terms of another exponential power sum with positive powers. Consequently, we derive a recursive formula for the power sum of the first $k$ natural numbers $h(p, k) :=\sum\limits_{s=1}^k s^p$ when $p$ is odd.

We note that Schaumberger and Faulhaber gave two different techniques for finding the power sums of natural numbers. In \cite{Shaumberger1976}, Schaumberger used the exponential generating function $$\sum\limits_{p=0}^{\infty}h(p,k)\frac{x^k}{k!} = \sum\limits_{r=1}^{p} e^{rx}$$ and L'H\^{o}spital's rule for evaluating $h(p,k)$.

 Faulhaber's formula \cite[Identity 1]{mcgown2007generalization}  is a non-recursive formula for finding $h(p, k)$. It involves the Bernoulli numbers $B_s$ and is given by
 \begin{align}\label{Faulhaber}
h(p, k) =\frac{1}{(p+1)}\left(k^{p+1}+\sum\limits_{j=1}^p (-1)^j\binom{p+1}{j}B_j k^{p-j+1}\right).
 \end{align}

In \cite{singh2009defining}, J. Singh defined the power sum $\psi_p(k)= a_1^p+a_2^p+\cdots+a_{\varphi(k)}^p$, where $a_1$, $a_2$, $\cdots$, $a_{\varphi(k)}$ are positive integers each of which is less than or equal to $n$ and relatively prime to $n$. There he gave a relation between $h(p, k)$ and $\psi_p(k)$. In \cite{alkan2014ramanujan}, E. Alkan used the results from \cite{singh2009defining} to obtain a formula for the weighted average $\frac{1}{k^{r+1}}\sum\limits_{j=1}^kj^rc_k(j)$, where $c_k(j)=\sum\limits_{\substack{{m=1}\\(m,k)=1}}^{r}e^{\frac{2 \pi imj}{k}}$
is the Ramanujan sum. Later in \cite{singh2016defining}, J. Singh studied the sum of products of power sums via the multiple products of their exponential generating functions.

 For non integral power sums, some approximate results only seem to exist in the literature. See, for example Theorem 1 in \cite{parks2004sums}.

 \section{The main result and proof}

 We state and prove our main identity  in this section. To begin with, we introduce the terms which we use in this paper. An \textit{ordered partition} or \textit{composition} of a positive integer $n$ is an $m$-tuple  $(j_1,\ldots,j_m)$ with $j_1+\ldots+j_m=n$, where $1\leq j_l\leq n$. The $j_l$ are called \textit{parts} of the composition and $m$, the number of parts in it, is called the \textit{length} of the composition. We use $C(n)$ to denote the set of all compositions of $n$. For $1\leq m\leq n$, $C(m,n)$ will denote the set of all compositions of $n$ with length $m$. The number of elements in $C(m,n)$ is  $\binom{n-1}{m-1}$ (\cite[Theorem 4.1]{andrews1998theory}). More properties and results related to compositions can be found, for example, in \cite[Chapter 4]{andrews1998theory}.

In this paper, we mainly derive the identity given in the next proposition.

\begin{prop}\label{mainprop}
For $k,p$ positive integers with $k\nmid m$ and $p\geq 1$, we have
\begin{align*}\sum\limits_{s=1}^{k-1}s^pe^{\frac{-2\pi ims}{k}}=-k^p + \sum\limits_{a=0}^{p-1}(-1)^{p-a}\binom{p}{a}k^{a}\sum\limits_{s=1}^{k-1}s^{p-a}e^{\frac{2\pi ims}{k}}.
\end{align*}
\end{prop}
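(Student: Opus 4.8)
The plan is to exploit the reflection symmetry $s\mapsto k-s$ in the index of summation, which turns the negative exponential into a positive one, and then to expand the resulting power $(k-s)^p$ by the binomial theorem. This is essentially the only natural move available, and all the hypotheses enter in transparent ways.

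First I would reindex the left-hand side by replacing $s$ with $k-s$. As $s$ runs over $1,\ldots,k-1$, so does $k-s$, and since $m\in\Z$ we have $e^{-2\pi i m(k-s)/k}=e^{-2\pi i m}e^{2\pi i m s/k}=e^{2\pi i m s/k}$, so that
\[
\sum_{s=1}^{k-1}s^p e^{\frac{-2\pi i m s}{k}}=\sum_{s=1}^{k-1}(k-s)^p e^{\frac{2\pi i m s}{k}}.
\]
Next I would apply the binomial theorem, $(k-s)^p=\sum_{a=0}^{p}\binom{p}{a}(-1)^{p-a}k^{a}s^{p-a}$, and interchange the two finite sums to get
\[
\sum_{s=1}^{k-1}(k-s)^p e^{\frac{2\pi i m s}{k}}=\sum_{a=0}^{p}(-1)^{p-a}\binom{p}{a}k^{a}\sum_{s=1}^{k-1}s^{p-a}e^{\frac{2\pi i m s}{k}}.
\]

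Finally I would peel off the top term $a=p$. For $a=p$ the inner sum is $\sum_{s=1}^{k-1}e^{2\pi i m s/k}$; since $k\nmid m$, the complete geometric sum $\sum_{s=0}^{k-1}e^{2\pi i m s/k}$ vanishes, hence $\sum_{s=1}^{k-1}e^{2\pi i m s/k}=-1$, and the $a=p$ term contributes $(-1)^{0}\binom{p}{p}k^{p}\cdot(-1)=-k^{p}$. The remaining terms, for $a=0,\ldots,p-1$, are exactly the sum displayed on the right-hand side of the proposition, which completes the argument.

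I do not expect a genuine obstacle here: the only place the hypothesis $k\nmid m$ is used is the evaluation of the geometric sum to $-1$, and the only place $m\in\Z$ is used is the cancellation $e^{-2\pi i m}=1$ after reflection. One could instead phrase the expansion of $s^{p}$ and $(k-s)^{p}$ through compositions, to match the combinatorial framework the paper sets up in this section, but the binomial-theorem route above is the most economical way to see the identity.
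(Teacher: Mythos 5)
Your proof is correct, and it is genuinely different from --- and considerably more direct than --- the argument in the paper. Both proofs begin with the reflection $s\mapsto k-s$ followed by the binomial theorem, but they apply it to opposite sides of the identity. You reflect the negative-exponential sum $f(p)=\sum_{s=1}^{k-1}s^pe^{-2\pi ims/k}$, obtaining $\sum_{s=1}^{k-1}(k-s)^pe^{2\pi ims/k}$, and expand $(k-s)^p$ so as to collect $k^a s^{p-a}$; this expresses $f(p)$ immediately as a linear combination of the positive-exponential sums $g(p-a)=\sum_{s=1}^{k-1}s^{p-a}e^{2\pi ims/k}$, and peeling off the $a=p$ term via $g(0)=-1$ (the only place $k\nmid m$ is used) finishes the proof in three lines. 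The paper instead reflects the positive-exponential sum $g(p)$ and collects $k^{p-a}s^{a}$, which yields the recurrence $f(p)=(-1)^pg(p)+\sum_{a=0}^{p-1}(-1)^{p+a+1}\binom{p}{a}k^{p-a}f(a)$ expressing $f(p)$ in terms of $g(p)$ and \emph{lower-order negative-exponential sums} $f(a)$; unwinding that recurrence is what forces the machinery of compositions, the bijection of Lemma 2.2, and Gessel's lemma on generating functions over compositions, all of which serve to show that the telescoped multinomial coefficient sums collapse to the single binomial coefficients $(-1)^{p-a}\binom{p}{a}$ appearing in your one-step expansion. Your route buys brevity and transparency about where each hypothesis enters; the paper's route is, in effect, an independent combinatorial verification of the same collapse, and the composition identities it passes through are arguably the paper's main point of interest, but they are not needed to establish the proposition itself.
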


We state and prove a lemma which is essential in the proof of the main proposition.

\begin{lemm}\label{mainlemma} Let $r\geq 1$.  The set $\{(i_1,\ldots, i_r): i_1,i_2,\cdots, i_r\in \{1,\cdots ,p-1\}, i_1>i_2>\cdots>i_r\}$ is in one to one correspondence with the set $C(r+1,p)$.
\end{lemm}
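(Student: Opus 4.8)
The plan is to write down an explicit bijection built from partial sums taken from the tail. Given a strictly decreasing tuple $(i_1,\ldots,i_r)$ with $p-1\geq i_1>i_2>\cdots>i_r\geq 1$, I would associate to it the tuple $(j_1,\ldots,j_{r+1})$ defined by $j_1=p-i_1$, $j_\ell=i_{\ell-1}-i_\ell$ for $2\leq\ell\leq r$, and $j_{r+1}=i_r$. The first thing to verify is that this lands in $C(r+1,p)$: each $j_\ell$ is a positive integer (for $j_1$ because $i_1\leq p-1$, for the intermediate $j_\ell$ because the $i_\ell$ are strictly decreasing integers, and for $j_{r+1}$ because $i_r\geq 1$), and the sum telescopes, $j_1+\sum_{\ell=2}^r j_\ell+j_{r+1}=(p-i_1)+(i_1-i_r)+i_r=p$, so indeed $(j_1,\ldots,j_{r+1})\in C(r+1,p)$.

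Next I would construct the inverse. Given $(j_1,\ldots,j_{r+1})\in C(r+1,p)$, set $i_\ell=j_{\ell+1}+j_{\ell+2}+\cdots+j_{r+1}$ for $1\leq\ell\leq r$. Since $\ell\leq r$ this sum is nonempty and all parts are at least $1$, so $i_\ell\geq 1$; since $i_\ell=p-(j_1+\cdots+j_\ell)$ and $j_1+\cdots+j_\ell\geq 1$, we get $i_\ell\leq p-1$; and $i_\ell-i_{\ell+1}=j_{\ell+1}\geq 1$, so $i_1>i_2>\cdots>i_r$ with all entries in $\{1,\ldots,p-1\}$. Thus this map sends $C(r+1,p)$ into the set of strictly decreasing tuples described in the statement.

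Finally I would check that the two assignments are mutually inverse: substituting one definition into the other and telescoping returns the original tuple in both directions (for instance $p-i_1=p-(j_2+\cdots+j_{r+1})=j_1$, $i_{\ell-1}-i_\ell=j_\ell$, and $i_r=j_{r+1}$). This last step is pure index bookkeeping. I do not anticipate any genuine obstacle; the only points requiring a little care are the boundary parts $j_1=p-i_1$ and $j_{r+1}=i_r$, and the observation that the hypothesis $r\geq 1$ ensures there is at least one cut point, so both sides are nonempty exactly when $r+1\leq p$. As a byproduct, since a strictly decreasing $r$-tuple with entries in $\{1,\ldots,p-1\}$ is the same datum as an $r$-element subset of $\{1,\ldots,p-1\}$, this argument also re-proves $|C(r+1,p)|=\binom{p-1}{r}$, consistent with \cite[Theorem 4.1]{andrews1998theory}.
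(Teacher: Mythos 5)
Your proposal is correct and uses essentially the same bijection as the paper: the map $(i_1,\ldots,i_r)\mapsto(p-i_1,\,i_1-i_2,\,\ldots,\,i_{r-1}-i_r,\,i_r)$ with inverse given by the partial sums $i_\ell=p-(j_1+\cdots+j_\ell)=j_{\ell+1}+\cdots+j_{r+1}$. The only cosmetic difference is that you establish bijectivity by exhibiting a two-sided inverse, whereas the paper argues injectivity directly (via the largest index where two tuples differ) and then checks surjectivity; both routes are sound.
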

\begin{proof}
Write $p-i_1=j_1$, $i_1-i_2=j_2$, $\cdots$, $i_{r-1}-i_r=j_r, i_r-0=j_{r+1}$. Since $j_1+j_2+ \cdots + j_{r+1}=p$ and $j_l$ are all positive, $(j_1,\ldots, j_{r+1})\in C(r+1,p)$. Let $(i_1, i_2, \ldots, i_r)$ and $(i_1',i_2', \ldots, i_r')$ be distinct. Let $l$ be the largest index such that  $i_l\neq i_l'$ (with the convention that $i_{l+1}=0$ if $l=r$). Thus $j_{l+1}=i_l-i_{l+1}$ and $j_{l+1}'=i_l'-i_{l+1}'$ will be distinct. Hence $(j_1, j_2, \ldots, j_{r+1})$ and $(j_1',j_2', \ldots, j_{r+1}')$ will be distinct. On the other hand, if $(j_1, j_2, \ldots, j_{r+1})\in C(r+1,p)$, let $i_1=p-j_1, i_2 = p-(j_1+j_2),\ldots$ so that $i_1,\ldots,i_r \in\{1,\ldots,p-1\}$ and they satisfy $i_1>\ldots>i_r$, with $r=0$ giving the empty set. This shows that this correspondence is onto as well.
\end{proof}
Since $C(r+1,p)$ has $\binom{p-1}{r}$ elements, we obtain the following corollary.
\begin{coro}
 The set defined in the above proposition has $\binom{p-1}{r}$ elements.
 \end{coro}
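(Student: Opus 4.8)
The plan is to deduce the count directly from the bijection already established in Lemma \ref{mainlemma}, so that no fresh combinatorial work is needed. That lemma exhibits an explicit one-to-one correspondence between the set $S_r := \{(i_1,\ldots,i_r) : i_1,\ldots,i_r \in \{1,\ldots,p-1\},\ i_1 > i_2 > \cdots > i_r\}$ and the set $C(r+1,p)$ of compositions of $p$ into $r+1$ parts. Since a bijection preserves cardinality, it suffices to count $C(r+1,p)$.

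For the count of compositions I would invoke the formula recorded in the opening of Section 2, namely that $C(m,n)$ has $\binom{n-1}{m-1}$ elements (quoted from \cite[Theorem 4.1]{andrews1998theory}). Setting $m = r+1$ and $n = p$ gives $|C(r+1,p)| = \binom{p-1}{r}$, and therefore $|S_r| = \binom{p-1}{r}$, which is exactly the assertion of the corollary.

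There is no real obstacle here: once the bijection of Lemma \ref{mainlemma} is in hand, the corollary is immediate, and the only thing to check is the arithmetic substitution $m=r+1$, $n=p$ into the composition-counting formula. If one preferred a self-contained route bypassing the composition count altogether, I would instead note that each strictly decreasing tuple $(i_1,\ldots,i_r)$ is uniquely determined by its underlying $r$-element set $\{i_1,\ldots,i_r\} \subseteq \{1,\ldots,p-1\}$, since any such set can be arranged in decreasing order in exactly one way. This gives a bijection between $S_r$ and the $r$-element subsets of a $(p-1)$-element set, of which there are $\binom{p-1}{r}$. Both routes produce the same value, consistent with the identity $|C(r+1,p)| = \binom{p-1}{r}$ used above.
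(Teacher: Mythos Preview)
Your proposal is correct and matches the paper's own argument exactly: the paper simply notes that the bijection of Lemma~\ref{mainlemma} together with the formula $|C(m,n)|=\binom{n-1}{m-1}$ (applied with $m=r+1$, $n=p$) gives $\binom{p-1}{r}$ elements. Your additional direct argument via $r$-element subsets of $\{1,\ldots,p-1\}$ is a valid alternative that the paper does not mention.
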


We state a result from \cite{gessel2013compositions} which we will be using in the proof of our main proposition.

\begin{lemm}(\cite[Lemma 1]{gessel2013compositions})\label{gessellemma}
If $u_1, u_2, u_3, \ldots$ is any sequence of numbers, then the coefficient of $x^n$ in $\left(1-\sum \limits_{i=1}^{\infty}u_ix^i\right)^{-1}$ is $\sum \limits_{(a_1,\ldots,a_k)\in C(n)}u_{a_1}u_{a_2}\cdots u_{a_k}$.
\end{lemm}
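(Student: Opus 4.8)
The plan is to expand $\left(1-\sum_{i=1}^{\infty}u_i x^i\right)^{-1}$ as a geometric series of formal power series and then read off the coefficient of $x^n$ by grouping the resulting monomials according to compositions. Write $U(x)=\sum_{i=1}^{\infty}u_i x^i$. Since the summation begins at $i=1$, the series $U(x)$ has zero constant term, i.e. $U(0)=0$; this is the structural feature that makes everything go through. First I would justify, at the level of formal power series, the identity
\begin{align*}
\left(1-U(x)\right)^{-1}=\sum_{k=0}^{\infty}U(x)^k.
\end{align*}
The point is that because $U(x)$ has no constant term, $U(x)^k$ is divisible by $x^k$, so for each fixed $n$ only the finitely many terms with $0\leq k\leq n$ can contribute to the coefficient of $x^n$; the infinite sum is therefore \emph{locally finite} and defines a genuine formal power series, and one verifies directly that multiplying it by $1-U(x)$ telescopes to $1$.

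Next I would extract the coefficient of $x^n$ from each power $U(x)^k$. Expanding the product
\begin{align*}
U(x)^k=\left(\sum_{i=1}^{\infty}u_i x^i\right)^k=\sum_{a_1\geq 1}\cdots\sum_{a_k\geq 1}u_{a_1}\cdots u_{a_k}\,x^{a_1+\cdots+a_k},
\end{align*}
the coefficient of $x^n$ is precisely $\sum u_{a_1}\cdots u_{a_k}$, the sum ranging over all $k$-tuples $(a_1,\ldots,a_k)$ of positive integers with $a_1+\cdots+a_k=n$. By definition these tuples are exactly the elements of $C(k,n)$, the compositions of $n$ of length $k$.

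Finally I would sum over $k$. For $n\geq 1$ the term $k=0$ contributes nothing (it only affects the constant coefficient), and terms with $k>n$ vanish since $U(x)^k$ is divisible by $x^k$, so
\begin{align*}
\left(1-U(x)\right)^{-1}\big|_{x^n}=\sum_{k=1}^{n}\;\sum_{(a_1,\ldots,a_k)\in C(k,n)}u_{a_1}\cdots u_{a_k}=\sum_{(a_1,\ldots,a_k)\in C(n)}u_{a_1}\cdots u_{a_k},
\end{align*}
where the last equality uses that $C(n)=\bigcup_{k=1}^{n}C(k,n)$ is the disjoint union of the sets of compositions of $n$ over all admissible lengths. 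The main thing to be careful about here is not a hard computation but the formal-power-series bookkeeping: one must confirm that the geometric expansion is legitimate (guaranteed by $U(0)=0$) and that interchanging the summation over $k$ with the extraction of the coefficient of $x^n$ is valid, which again follows from local finiteness.

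As an alternative I would keep in reserve a proof by induction on $n$: setting $F(x)=\left(1-U(x)\right)^{-1}$ and comparing coefficients in the relation $F(x)\left(1-U(x)\right)=1$ yields the recurrence $f_n=\sum_{i=1}^{n}u_i f_{n-i}$ with $f_0=1$, and the claimed formula then follows by splitting each composition of $n$ according to its first part $a_1=i$, leaving a composition of $n-i$ to which the inductive hypothesis applies.
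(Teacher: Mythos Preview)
Your argument is correct and is the standard proof: expand $(1-U(x))^{-1}$ as a geometric series in the formal power series $U(x)=\sum_{i\geq 1}u_ix^i$ (legitimate because $U$ has zero constant term), read off the coefficient of $x^n$ in $U(x)^k$ as the sum over $C(k,n)$, and sum over $k$. The inductive alternative you sketch is also fine. Note that the paper itself does not supply a proof of this lemma; it simply quotes the result from Gessel and Li, so there is no in-paper argument to compare against, but your approach is exactly the one given in that reference.
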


Now we prove Proposition \ref{mainprop}.
\begin{proof}[Proof of Proposition \ref{mainprop}]

We start the proof using the binomial expansion:

\begin{align*}
\sum\limits_{s=1}^{k-1}s^pe^{\frac{2\pi ims}{k}}&
=\sum\limits_{s=1}^{k-1}(k-s)^pe^{\frac{2\pi im(k-s)}{k}}\\
&=\sum\limits_{s=1}^{k-1}(k-s)^pe^{-\frac{2\pi ims}{k}}\\
&=\sum\limits_{s=1}^{k-1}(-1)^ps^pe^{-\frac{2\pi ims}{k}}+\sum\limits_{a=0}^{p-1}(-1)^{a}\binom{p}{a}k^{p-a}\sum\limits_{s=1}^{k-1}s^{a}e^{\frac{-2\pi ims}{k}}.
\end{align*}

Hence
\begin{align}
\sum\limits_{s=1}^{k-1}s^pe^{-\frac{2\pi ims}{k}}=(-1)^p\sum\limits_{s=1}^{k-1}s^pe^{\frac{2\pi ims}{k}}+\sum\limits_{a=0}^{p-1}(-1)^{p+a+1}\binom{p}{a}k^{p-a}\sum\limits_{s=1}^{k-1}s^{a}e^{\frac{-2\pi ims}{k}}.\label{idn:negexptopos}
\end{align}
For notational convenience, we write $f(p)=\sum\limits_{s=1}^{k-1}s^pe^{\frac{-2\pi ims}{k}}$ and $g(p)= \sum\limits_{s=1}^{k-1}s^{p}e^{\frac{2\pi ims}{k}}$. With this notation, we rewrite Equation \eqref{idn:negexptopos} and expand it recursively as
\begin{align*}
f(p)=&(-1)^pg(p)+\sum\limits_{a=0}^{p-1}(-1)^{p+a+1}k^{p-a}\binom{p}{a}f(a)\\
=&(-1)^pg(p)+(-1)^{p+1}\binom{p}{p-1}kg(p-1)\\&+\left[(-1)^{p+1}\binom{p}{p-2}+(-1)^{p+2}\binom{p}{p-1}\binom{p-1}{p-2}\right]k^2g(p-2)\\
&+\Bigg[(-1)^{p+1}\binom{p}{p-3}+(-1)^{p+2}\binom{p}{p-2}\binom{p-2}{p-3}\\
&+(-1)^{p+2}\binom{p}{p-1}\binom{p-1}{p-3}+(-1)^{p+3}\binom{p}{p-1}\binom{p-1}{p-2}\binom{p-2}{p-3}\Bigg]\\&\times k^3g(p-3)+\cdots.
\end{align*}
It can be seen by induction that for $a\geq 2$, $g(p-a)$ has coefficient as the sum
\begin{align}
\sum \limits_{\substack{i_1,i_2,\cdots, i_r\in \left\lbrace p-a+1,\cdots ,p-1\right\rbrace\\i_1>i_2>\cdots>i_r}}(-1)^{p+r+1}\binom{p}{i_1}\binom{i_1}{i_2}\ldots \binom{i_{r-1}}{i_r}\binom{i_{r}}{p-a}k^a\label{idn:gpacoeff},
\end{align}
with the convention that $\{i_1,\ldots, i_r\}$ can also be the empty set to allow the binomial coefficient $\binom{p}{p-a}$.
Thus the coefficient of $k^pg(0)$ is
\begin{align}\label{kpg0}
\sum \limits_{\substack{i_1,i_2,\cdots, i_r\in \left\lbrace 1,\cdots ,p-1\right\rbrace\\i_1>i_2>\cdots>i_r}}(-1)^{p+r+1}\binom{p}{i_1}\binom{i_1}{i_2}\ldots \binom{i_{r-1}}{i_r}\binom{i_{r}}{0}.
\end{align}
Now
\begin{align*}
\binom{p}{i_1}\binom{i_1}{i_2}\ldots \binom{i_{r-1}}{i_r}\binom{i_{r}}{0}
&=\frac{p!}{(p-i_1)!i_1!}\frac{i_1!}{(i_1-i_2)!i_2!}\cdots\frac{i_{r-1}!}{(i_{r-1}-i_r)!i_r!}\frac{i_r!}{(i_r-0)!0!}\\
&=\frac{p!}{(p-i_1)!(i_1-i_2)!\cdots(i_r-0)!0!}.
\end{align*}
Now we use Lemma \ref{mainlemma}.  The binomial product sum in Equation (\ref{kpg0}) consists of the summand $(-1)^{p+1}\binom{p}{0}$. This term corresponds to the empty subset of $\{1,\ldots,p-1\}$. Corresponding to this empty set we may take the single available element $(p)$ from $C(1,p)$.
Hence, $\binom{p}{i_1}\binom{i_1}{i_2}\cdots \binom{i_{r-1}}{i_r}\binom{i_{r}}{0}$ and $\binom{p}{0}$ can be replaced with the multinomial coefficient $\binom{p}{j_1, j_2, \cdots , j_{r+1}}$  and so the coefficient of $k^p g(0)$ can be rewritten as
\begin{align*}
\sum\limits_{r=0}^{p-1}\sum\limits_{\substack{(j_1,j_2,\ldots,j_{r+1})\in C(r+1,p)}}&(-1)^{p+r+1} \binom{p}{j_1, j_2, \cdots , j_{r+1}}\\
&=\sum\limits_{r=1}^{p}\sum\limits_{\substack{(j_1,j_2,\ldots,j_{r})\in C(r,p)}}(-1)^{p+r} \binom{p}{j_1, j_2, \cdots , j_{r}}\\
&=(-1)^p p!\sum\limits_{r=1}^{p}\sum\limits_{\substack{(j_1,j_2,\ldots,j_{r})\in C(r,p)}}(-1)^{r} \frac{1}{j_1! j_2! \cdots  j_{r}!}\\
&=(-1)^p p!\times \text{ coefficient of } x^p \text{ in }  \left(1+\sum\limits_{i=1}^\infty\frac{x^i}{i!}\right)^{-1}\\&   \left(\text{obtained by taking } u_i=\dfrac{-1}{i!}\text{  in Lemma \ref{gessellemma}}\right)\\
&=(-1)^p p!\times \text{ coefficient of } x^p \text{ in } \left(e^{x}\right)^{-1}\\
&= 1.
\end{align*}
Now we compute the coefficient of $k^ag(p-a)$. We take the product of binomial coefficients appearing in Equation \eqref{idn:gpacoeff} and simplify it further:
\begin{align*}
\binom{p}{i_1}\binom{i_1}{i_2}\ldots \binom{i_{r-1}}{i_r}\binom{i_{r}}{p-a}
&=\binom{p}{p-i_1}\binom{i_1}{i_1-i_2}\ldots \binom{i_{r-1}}{i_{r-1}-i_r}\binom{i_{r}}{i_r-p+a}\\
&=\frac{p!}{(p-i_1)!(i_1-i_2)!\cdots(i_r-p+a)!(p-a)!}\\
&=\frac{a!}{(p-i_1)!(i_1-i_2)!\cdots(i_r-p+a)!}\binom{p}{a}.
\end{align*}
Write $p-i_1=j_1$, $i_1-i_2=j_2$, $\cdots, i_{r-1}-i_r=j_r, i_r-(p-a)=j_{r+1}$. So $j_1+j_2+ \cdots + j_{r+1}=a$. As in the above case, we can see that $(j_1,j_2, \cdots, j_{r+1})$ is in a one to one correspondence with  $(i_1$, $i_2$, $\cdots$, $i_r$), where $i_1>\ldots>i_r$. So we have $$\binom{p}{i_1}\binom{i_1}{i_2}\cdots \binom{i_{r-1}}{i_r}\binom{i_{r}}{p-a}=\binom{p}{a} \binom{a}{j_1, j_2, \cdots , j_{r+1}}.$$ Hence we get the coefficient of $k^ag(p-a)$ as
\begin{align*}
(-1)^{p-a}&\sum\limits_{r=0}^{p-1}\sum\limits_{\substack{(j_1,j_2,\ldots,j_{r+1})\in C(a,r+1)}}(-1)^{a+r+1}\binom{p}{a} \binom{a}{j_1, j_2, \cdots , j_{r+1}}\\
&=(-1)^{p-a}\binom{p}{a}\sum\limits_{r=0}^{p-1}\sum\limits_{\substack{(j_1,j_2,\ldots,j_{r+1})\in C(a,r+1)}}(-1)^{a+r+1} \binom{a}{j_1, j_2, \cdots , j_{r+1}}\\
&=(-1)^{p-a}\binom{p}{a}.
\end{align*}
Thus
\begin{align*}
f(p)&=(-1)^pg(p)+(-1)^{p-1}\binom{p}{p-1}kg(p-1)+\sum\limits_{a=2}^{p}(-1)^{p-a}\binom{p}{a}k^{a}g(p-a)\\
&=\sum\limits_{a=0}^{p-1}(-1)^{p-a}\binom{p}{a}k^{a}g(p-a)+k^{p}g(0)\\
&=-k^p + \sum\limits_{a=0}^{p-1}(-1)^{p-a}\binom{p}{a}k^{a}g(p-a)
\end{align*}
which is what we claimed. In the last step above, we used the fact that $g(0)= \sum\limits_{s=1}^{k-1}e^{\frac{2\pi ims}{k}}=-1$  when $e^{\frac{2\pi ims}{k}}\neq 1$.
\end{proof}

Let us see an application of the computations above. When $k|m$, $e^{\frac{2\pi ims}{k}}=1$ and $g(0)=k-1$ in the above so that we get
\begin{align}\label{powersum}
\sum\limits_{s=1}^{k-1}s^p=k^p(k-1) + \sum\limits_{a=0}^{p-1}(-1)^{p-a}\binom{p}{a}k^{a}\sum\limits_{s=1}^{k-1}s^{p-a}.
\end{align}

Let $h(p,k) = \sum\limits_{s=1}^{k}s^p$. If $p$ is odd, from the above, we get a recursive formula for the sum of $p$\textsuperscript{th} powers of the first $k$ natural numbers as

\begin{align*}h(p,k)=\frac{1}{2}\left((k+1)^pk + \sum\limits_{a=1}^{p-1}(-1)^{p-a}\binom{p}{a}(k+1)^{a}h(p-a,k)\right).
\end{align*}

On replacing $p-a$ with $j$, we get the following.

\begin{coro}\label{hpk}For $p,k$ positive integers with $p$ odd, we have
\begin{align}\label{hpkodd}
h(p,k)=\frac{1}{2}\left((k+1)^pk + \sum\limits_{j=1}^{p-1}(-1)^{j}\binom{p}{j}(k+1)^{p-j}h(j,k)\right).
\end{align}

\end{coro}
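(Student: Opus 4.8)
The plan is to derive Corollary \ref{hpk} directly from Equation \eqref{powersum}, which was obtained above in the case $k \mid m$ (where $g(0) = k-1$ instead of $-1$), by making the substitution $k \mapsto k+1$ and then using the hypothesis that $p$ is odd to solve for $h(p,k)$.

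First I would observe that Equation \eqref{powersum} holds for every positive integer $k$, so replacing $k$ by $k+1$ is legitimate. Under this replacement the left-hand side becomes $\sum_{s=1}^{k} s^p = h(p,k)$, and each inner sum $\sum_{s=1}^{k} s^{p-a}$ becomes $h(p-a,k)$; hence
\[
h(p,k) = (k+1)^p k + \sum_{a=0}^{p-1}(-1)^{p-a}\binom{p}{a}(k+1)^a\, h(p-a,k).
\]
Next I would split off the $a = 0$ term of the sum, which equals $(-1)^p\binom{p}{0}(k+1)^0 h(p,k) = (-1)^p h(p,k)$. This is precisely where the parity assumption enters: since $p$ is odd, $(-1)^p = -1$, so the $a=0$ term contributes $-h(p,k)$. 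Transposing it to the left-hand side gives
\[
2\,h(p,k) = (k+1)^p k + \sum_{a=1}^{p-1}(-1)^{p-a}\binom{p}{a}(k+1)^a\, h(p-a,k),
\]
and dividing by $2$ produces the recursion displayed just before the statement of the corollary. Finally, re-indexing the sum by $j = p - a$ (so that $a$ and $j$ both run over $1, \ldots, p-1$, while $(-1)^{p-a} = (-1)^j$, $\binom{p}{a} = \binom{p}{j}$, $(k+1)^a = (k+1)^{p-j}$ and $h(p-a,k) = h(j,k)$) converts this into Identity \eqref{hpkodd}, as claimed.

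I do not expect a genuine obstacle in this argument: all the substantive work is already contained in Proposition \ref{mainprop} and in the evaluation of the coefficient of $k^p g(0)$. The only points deserving attention are that one must use the branch $k \mid m$ of the earlier computation so that $g(0) = k-1$, which is exactly what yields \eqref{powersum}, and that the oddness of $p$ is essential — if $p$ were even, the $a=0$ term would be $+h(p,k)$, cancelling the left-hand side and leaving only a linear relation among $h(1,k), \ldots, h(p-1,k)$ rather than a recursive formula for $h(p,k)$.
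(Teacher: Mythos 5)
Your proposal is correct and follows essentially the same route as the paper: specialize to the case $k\mid m$ to obtain Identity (\ref{powersum}), replace $k$ by $k+1$, use the oddness of $p$ to move the $a=0$ term $(-1)^p h(p,k)=-h(p,k)$ to the left-hand side, divide by $2$, and reindex with $j=p-a$. Your closing remark about the even case cancelling $h(p,k)$ is exactly the observation the paper makes after the corollary.
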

Such a formula cannot be derived when $p$ is even because of the cancellation of $h(p,k)$ taking place from both sides of Identity (\ref{powersum}).

To illustrate the use of the above identity, let us compute  $\sum\limits_{s=1}^{k}s^3$. Note that the knowledge of  $h(1,k)=\sum\limits_{s=1}^{k}s$ and $h(2,k)=\sum\limits_{s=1}^{k}s^2$ is necessary for this computation. We have $h(1, k)= \frac{k(k+1)}{2}$ and $h(2, k)= \frac{k(k+1)(2k+1)}{6}$.
Thus
\begin{align*}
h(3, k) &= \frac{1}{2}\left( (k+1)^3k + \sum\limits_{a=1}^{2}(-1)^{3-a}\binom{3}{a}(k+1)^a h(3-a,k)\right)\\
&=\frac{1}{2}\left( (k+1)^3k + (-1)^{2}\binom{3}{1}(k+1) h(2,k)+(-1)^{1}\binom{3}{2}(k+1)^2 h(1,k)\right)\\
&=\frac{1}{2}\left( (k+1)^3k + 3(k+1) \frac{k(k+1)(2k+1)}{6}-3(k+1)^2 \frac{k(k+1)}{2}\right)\\
&=\frac{1}{4}\left(   k(k+1)^2(2k+1)-(k+1)^3k \right)\\
&=\left[\frac{k(k+1)}{2}\right]^2.
\end{align*}
We cannot compute $h(4,k)$ using Identity (\ref{hpkodd}). But if we use the known formula   $h(4,k)= \frac{k(k+1)(2k+1)(3k^2+3k-1)}{30}$, we get $h(5,k)$ as $\frac{k^2(k+1)^2(2k^2+2k-1)}{12}$ if we proceed as in the above computation for $h(3,k)$.

Another application of Identity (\ref{hpkodd}) is that we can retrieve the Bernoulli numbers using Identities (\ref{Faulhaber}) and (\ref{hpkodd}). For example, if we put $p=1$ in the right-hand side of both the Equations (\ref{Faulhaber}) and (\ref{hpkodd}) and  equate them, we get
\begin{align*}
\frac{k(k+1)}{2}=\frac{1}{2}\left(k^2+(-1)\binom{2}{1}B_1k \right)= \frac{1}{2}\left(k^2-2B_1k \right).
\end{align*}
On comparing the coefficients of $k$ on both sides, we get $B_1=-\frac{1}{2}$.

Similarly, if we put $p=3$ in the right-hand side of both identities (\ref{Faulhaber}) and (\ref{hpkodd}), we get
\begin{align*}
 \frac{k^4}{4}-B_1k^3+\frac{3}{2}B_2k^2-4B_3k=&\frac{(k+1)^3k}{2}-\frac{3}{2}(k+1)^2\frac{k(k+1)}{2}\\&+\frac{3}{2}(k+1)\frac{k(k+1)(2k+1)}{6}.
\end{align*}
By comparing the coefficients of $k^2$ on both sides in the above, we get $B_2=\frac{1}{6}$.
 Proceeding like this we can find the values of $B_4$, $B_6$,$\cdots$. Note that $B_{2n-1}=0$ for all positive integers $n$ except when $n=1$.

\section{Acknowledgements}
 The first author thanks the Kerala State Council for Science,Technology and Environment, Thiruvananthapuram, Kerala, India for providing financial support for carrying out this research work. The authors thank the reviewer for his/her
comments which helped improve this paper.
 

\end{document}